\newtheorem{remark}{Remark}
\newtheorem{problem}{Problem}
\newtheorem{theorem}{Theorem}
\newtheorem{corollary}{Corollary}
\newenvironment{proof}{{\bf Proof.}}{\hspace*{1mm}\hfill\rule{2mm}{2mm}}
\newtheorem{pretheorema}{{\bf Theorem}}
\newenvironment{theorema}[1]{\begin{pretheorema}
{\hspace{-0.2em}{\rm #1}{\bf}}}{\end{pretheorema}}
\newtheorem{prelem}{{\bf Theorem}}
\newtheorem{prelemlem}{{\bf Lemma}}
\def\m#1#2{\raise 0.2ex\hbox{%         % You can change 0.2
    ${#1_{\displaystyle #2}}$}}
\def\x#1{\raise 0.5ex\hbox{%         % You can change 0.5
    ${#1}$}}
\def\n#1{\vbox to 3mm{\vspace{0mm}\vfill \hbox to 4.5mm{\hfill
             $#1$\hfill} \vfill }}
\def\x{{\bf x}}
\newcommand{\arb}[1]{{\rm arb}(#1)} %ARBORICITY
\title{\bf On the construction of tree decompositions of hypercubes}
\author{{\sc Negin Karisani,} {\sc  E.S.~Mahmoodian\thanks{The research of this author is partially supported by a grant from the INSF.}}
 }
\date{}
\begin{document}
\maketitle
\vspace*{-1cm}
\begin{center}
\footnotesize{
Department of Mathematical Sciences \\
 Sharif University of Technology \\
 P.O. Box 11155--9415 \\
 Tehran, I.R. IRAN \\
 karisani@alum.sharif.edu \\ emahmood@sharif.edu}
 \end{center}
%
%{\Large\bf Spanning_Tree_Hypercube20 (arb Qn-ver17.tex)}
\begin{abstract}
%% Text of abstract
%
%Hypercube $Q_n$ is a graph  in which the vertices are all binary vectors of length $n$,
%and two vertices are adjacent if and only if their components differ in exactly one place.
%
There are different concepts regarding to tree decomposition of a graph $G$. For the Hypercube $Q_n$,
these concepts have been shown to have many applications. But some diverse papers on this subject make it difficult to follow
 what is precisely known.  In this note first we will mention some known results on the tree decomposition of
  hypercubes and then introduce new  explicit
constructions for the previously known and unknown cases.
\end{abstract}
%
%\begin{keyword}
%% keywords here, in the form: keyword \sep 
{\bf Keywords: }
Hypercube,  tree decomposition, spanning tree packing number, arboricity
%% MSC codes here, in the form: \MSC code \sep code
%% or \MSC[2008] code \sep code (2000 is the default)
 %\MSC 05C15
%\end{keyword}
%
%\end{frontmatter}
%
%%
%% Start line numbering here if you want
%%
% \linenumbers
%
%% main text
\section{Introduction and preliminaries}       %sec1
\label{1}

%% The Appendices part is started with the command \appendix;
%% appendix sections are then done as normal sections
%% \appendix

%% \section{}
%% \label{}

%% References
%%
%% Following citation commands can be used in the body text:
%% Usage of \cite is as follows:
%%   \cite{key}         ==>>  [#]
%%   \cite[chap. 2]{key} ==>> [#, chap. 2]
%%

%% References with bibTeX database:
%%%%%%%%%%%%%%%%%%%%%%%%%%%%%%%%%%%%%%%%%%%%%%%%%%%%%%%%%%%%%%%%%%%%%%%%%%%%%%%%%%%%%%%%%%%%%
There is much interest in the theoretical study of the structure of hypercubes because their structure has played an important role in the development of parallel processing and is still quite popular and influential.
As a graph the \textit{$n$-cube} or \textit{$n$-dimensional hypercube} $Q_n$ is a graph  in which the vertices are all binary vectors of length $n$,
and two vertices are adjacent if and only if the Hamming distance between
them is 1, i.e. their components differ in one place. There are many challenging conjectures about hypercubes.
 $Q_n$ is also defined recursively in terms of the cartesian product of two graphs \cite{MR949280} as follows:
$$\begin{array}{l}
Q_1=K_2 \\
Q_n=K_2 \square Q_{n-1}.
\end{array}$$

There are different concepts regarding to tree decomposition of a graph $G$. These concepts have been shown to have many applications including for the hypercube, $Q_n$ (see~\cite{MR2817074}, \cite{parhami1999introduction} and \cite{wang1998tree}).
Here we mention some of these concepts as follows.
The {\it arboricity}, $\arb{G}$, of a graph $G$ is the minimum number of edge-disjoint forests into which $G$ can be decomposed. A family of subtrees of a graph $G$ whose edge sets form a partition of the edge set of $G$ is called \textit{tree decomposition} of $G$. The minimum number of trees in a tree decomposition of $G$ is called the \textit{ tree number} of $G$ and denoted by $\tau (G)$. Since each forest on $|V|$ vertices has at most $|V|-1$ edges, $\lceil |E(G)|/(|V(G)|-1) \rceil$ is a trivial lower bound for both the arboricity and the tree number of $G$. The arboricity is clearly a lower bound for tree number. Another related decomposition parameter for a graph $G$ is the \textit{spanning tree packing number} denoted by $\sigma(G)$, is the maximum number of edge disjoint spanning trees contained in $G$. In an analogous way for a graph $G$, $\lfloor |E(G)|/(|V(G)|-1) \rfloor$ is a trivial upper bound for $\sigma(G)$.
So we have the following inequalities:
\begin{equation}\label{inq}
\sigma(G) \leq \Big\lfloor \frac{|E(G)|}{|V(G)|-1} \Big\rfloor \leq  \Big\lceil \frac{|E(G)|}{|V(G)|-1} \Big\rceil \le \arb{G} \leq \tau(G).
\end{equation}

%-------------------------------------------------------------------------------------------------------------------
There are quite few diverse papers on this subject about $Q_n$ which makes it difficult to follow what is precisely known. For example, recently in a note Wagner and Wild~\cite{Wagner20121819} stated  that ``For even $n$, an explicit construction of $n/2$ edge-disjoint spanning trees in  $Q_n$
appears in~\cite{MR1695754}. These trees are not isomorphic. There are $n/2$ leftover edges, which form a path. For odd $n$, an explicit
construction has not yet been found''. In this note we summarize some known results on the tree decomposition of hypercubes and
introduce explicit
constructions for the previously known and unknown cases.

%-----------------------------------------------------------------
\section{Tree packing number}
The motivation of finding the maximum number of edge-disjoint spanning trees arises in the context of developing efficient routing algorithms such as wormhole routing in distributed memory parallel multicomputers \cite{wang1998tree}.

Tutte $(1961)$ and Nash-Williams $(1961)$ independently proved the same sufficient conditions for the existence of $k$ edge-disjoint spanning trees. The following corollary can be deduced from their result:
\begin{corollary}
\label{tpkn}
(\cite{MR2368647}~page 572) Every $2k$-edge-connected graph contains $k$ edge-disjoint spanning trees.
\end{corollary}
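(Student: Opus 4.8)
The plan is to derive this from the Nash--Williams/Tutte theorem in its sharp form, which I would take as the statement actually proved by those authors: a graph $G$ contains $k$ edge-disjoint spanning trees if and only if, for every partition $P$ of $V(G)$ into nonempty classes, the number $e_G(P)$ of edges of $G$ joining vertices in distinct classes of $P$ satisfies $e_G(P)\ge k(|P|-1)$. Granting this characterization, the corollary reduces to checking that the partition inequality holds automatically whenever $G$ is $2k$-edge-connected.

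First I would record the translation of the hypothesis: for a graph on at least two vertices, being $2k$-edge-connected is equivalent to the statement that every edge cut $\partial S$ with $\emptyset\neq S\subsetneq V(G)$ has at least $2k$ edges. (If $k=0$ the corollary is vacuous, so assume $k\ge 1$; then $G$ is in particular connected and has no isolated vertices.) Then I would fix an arbitrary partition $P=\{V_1,\dots,V_r\}$ of $V(G)$ and let $e_G(P)$ be the number of cross-edges. If $r=1$ the bound $e_G(P)\ge k(r-1)=0$ is trivial, so suppose $r\ge 2$; in that case each $V_i$ is a proper nonempty subset of $V(G)$, so $|\partial V_i|\ge 2k$.

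The one substantive step is a double count: every cross-edge has its two endpoints in two different classes, hence is counted exactly twice in $\sum_{i=1}^{r}|\partial V_i|$, while edges inside a single class contribute nothing, so $\sum_{i=1}^{r}|\partial V_i| = 2\,e_G(P)$. Combining with $|\partial V_i|\ge 2k$ gives $2\,e_G(P)\ge 2kr$, i.e. $e_G(P)\ge kr\ge k(r-1)$. Since $P$ was an arbitrary partition, the Nash--Williams/Tutte criterion is satisfied, and therefore $G$ contains $k$ edge-disjoint spanning trees.

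There is no serious obstacle: the argument is a one-line consequence of the cited theorem once the partition inequality is verified, and the verification is just the elementary counting above (which in fact yields the slightly stronger bound $e_G(P)\ge kr$). The only thing that needs a little care is making explicit that $2k$-edge-connectivity supplies the uniform lower bound $|\partial S|\ge 2k$ on all nontrivial edge cuts, which is precisely where the hypothesis enters.
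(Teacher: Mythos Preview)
Your argument is correct and is exactly the standard derivation from the Nash--Williams/Tutte partition criterion. The paper itself does not supply a proof: it merely states the corollary as a known consequence of the Tutte/Nash--Williams theorem and cites Bondy--Murty for it, so your write-up is precisely the omitted detail behind the paper's one-line attribution.
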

From Corollary~\ref{tpkn}, it is easily deduced that $\sigma(Q_n)= \lfloor n/2 \rfloor$, also in \cite{MR1812338} by using an observation of Paul Catlin, the spanning tree packing number  for several family of graphs including hypercubes, is determined.

\begin{corollary}
\label{tree-packing}
The spanning tree packing number of $Q_n$ is $\lfloor n/2 \rfloor$.
\end{corollary}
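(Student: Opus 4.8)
The plan is to derive the value $\sigma(Q_n)=\lfloor n/2\rfloor$ from the two ingredients already assembled in the excerpt: the trivial upper bound in inequality~(\ref{inq}) and the edge-connectivity consequence of the Tutte--Nash-Williams theorem recorded in Corollary~\ref{tpkn}. First I would establish the upper bound. The hypercube $Q_n$ has $|V(Q_n)|=2^n$ vertices and $|E(Q_n)|=n\,2^{n-1}$ edges, since it is $n$-regular. Plugging these into the trivial upper bound $\sigma(G)\le\lfloor |E(G)|/(|V(G)|-1)\rfloor$ from~(\ref{inq}) gives
\[
\sigma(Q_n)\le\Big\lfloor \frac{n\,2^{n-1}}{2^n-1}\Big\rfloor .
\]
A short estimate shows this floor equals $\lfloor n/2\rfloor$: write $n\,2^{n-1}=\tfrac{n}{2}(2^n-1)+\tfrac n2$, so the ratio is $\tfrac n2+\tfrac{n}{2(2^n-1)}$, and the fractional correction $\tfrac{n}{2(2^n-1)}$ is strictly less than $\tfrac12$ for all $n\ge1$; hence the floor of the ratio is $\lfloor n/2\rfloor$ whether $n$ is even or odd. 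This gives $\sigma(Q_n)\le\lfloor n/2\rfloor$.

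For the matching lower bound I would invoke Corollary~\ref{tpkn}: every $2k$-edge-connected graph contains $k$ edge-disjoint spanning trees. It is a classical fact (and easy to prove by induction on $n$ using the recursive definition $Q_n=K_2\,\square\,Q_{n-1}$, or directly from $n$-regularity plus vertex-transitivity) that the edge-connectivity of $Q_n$ equals $n$. Setting $k=\lfloor n/2\rfloor$, we have $2k\le n$, so $Q_n$ is $n$-edge-connected and a fortiori $2k$-edge-connected; Corollary~\ref{tpkn} then yields $\lfloor n/2\rfloor$ edge-disjoint spanning trees, i.e. $\sigma(Q_n)\ge\lfloor n/2\rfloor$. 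Combining the two bounds gives $\sigma(Q_n)=\lfloor n/2\rfloor$.

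The proof has no genuinely hard step; the only point requiring a word of justification is the edge-connectivity claim $\lambda(Q_n)=n$, which is standard but should be cited or sketched. The real content of the paper — as the introduction makes clear — is not this existence statement but the \emph{explicit} construction of the $\lfloor n/2\rfloor$ spanning trees, particularly in the odd case left open by Wagner and Wild; Corollary~\ref{tree-packing} as stated is merely the clean numerical fact that frames those constructions.
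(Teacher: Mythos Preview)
Your proof is essentially identical to the paper's: both combine the trivial upper bound $\sigma(Q_n)\le\lfloor n\,2^{n-1}/(2^n-1)\rfloor=\lfloor n/2\rfloor$ from~(\ref{inq}) with Corollary~\ref{tpkn} applied via the $n$-(edge-)connectivity of $Q_n$ for the lower bound. Your extra detail on evaluating the floor is welcome, though the strict inequality $\tfrac{n}{2(2^n-1)}<\tfrac12$ actually fails at $n=1$ (it equals $\tfrac12$)---but there the assertion $\sigma(Q_1)=\lfloor 1/2\rfloor=0$ is itself false, so the corollary should be read for $n\ge 2$.
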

\begin{proof}
Since the hypercube $Q_n$ is $n$-connected, by Corollary~\ref{tpkn} $\sigma(Q_n) \geq \lfloor n/2 \rfloor$. On the other hand by  the upper bound that was mentioned in the previous section we have $\sigma(Q_n) \leq \lfloor{{n2^{n-1}}/ {2^n-1}}\rfloor = \lfloor n / 2 \rfloor$. So $\sigma(Q_n) = \lfloor n/2 \rfloor$.
\end{proof}

The number of uncovered edges in the tree packing of Corollary~\ref{tree-packing}, is $k$ if $n=2k$, and is $2^{2k}+k$  if $n=2k+1$. The structure of uncovered edges is not clear in these cases, but Catlin \cite{MR1163250} has a result similar to Corollary~\ref{tpkn} which shows that \textit{any} set of $k$ edges in $G$ can be the set of leftover edges.
\begin{theorema} (\cite{MR1163250}, page 182) \label{ctl}
Let $G$ be a graph and let $k \in \mathbb{N} $. Then $G$ is $2k$-edge-connected if and only if for any set $E_k$ of $k$ edges of $G$, the subgraph $G \setminus E_k$ has at least $k$ edge-disjoint spanning trees.
\end{theorema}
In the following theorem Barden et al.   give a construction of edge-disjoint spanning trees for $Q_{2k}$.
\begin{theorema}(\cite{MR1695754}, page 15) \label{eds}
Let $Q_{2k}$ denote a hypercube of dimension $2k$. Then $k$ edge-disjoint spanning trees can be embedded in $Q_{2k}$, with the remaining $k$ edges forming a path.
\end{theorema}
Wagner and Wild in~\cite{Wagner20121819} referred to Theorem~\ref{eds} and stated  that: ``For odd $n$, an explicit construction has not yet been found''. In the next section we give a construction for such $n$.

%-----------------------------------------------------------------
\section{Arboricity and tree number}
The arboricity of a graph is a measure of how dense the graph is. This invariant plays a significant role in bounding the complexity of certain algorithms, some of those are presented in~\cite{MR2817074}.

An exact formula for arboricity is due to Nash-Williams $(1964)$:
\begin{theorema} (\cite{MR2817074}, page 240) \label{Tnash}
For any graph $G$ with at least one edge,
$$\arb{G}=\mathop{\rm max}_{H\subseteq G} \Big\lceil {|E(H)| \over |V(H)|-1} \Big\rceil,$$
where the maximum is taken over all nontrivial subgraphs  $H\subseteq G$.
\end{theorema}
In \cite{MR1154585} it is shown that arboricity for general graphs can be calculated by a polynomial-time algorithm but in this section we consider simple constructions for hypercubes.

In the following theorem, Truszczy{\'n}ski $(1988)$ shows an exact value of a tree number for hypercubes.
\begin{theorema}(\cite{MR984777}, page 279) \label{truz}
If $G=C_{m_1} \square C_{m_2} \square \cdots \square C_{m_n}$  then $\tau(G)=n+1$. If $G=P_{m_1} \square C_{m_2} \square \cdots \square C_{m_n}$  then $\tau(G)=n$. In particular, $\tau(Q_n)=\lceil {n+1\over 2}\rceil$.
\end{theorema}
The following corollary follows easily.
\begin{corollary}
The arboricity of hypercube $Q_n$ is $\lfloor n/2 \rfloor +1$. %$\lceil {n+1\over 2}\rceil$.
\end{corollary}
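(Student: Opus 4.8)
The plan is to sandwich $\arb{Q_n}$ between two quantities that both evaluate to $\lfloor n/2\rfloor+1$, using the chain of inequalities~\eqref{inq} together with Truszczy\'nski's formula (Theorem~\ref{truz}); no combinatorial construction is needed.

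For the upper bound I would simply invoke $\arb{G}\le\tau(G)$ from~\eqref{inq} and Theorem~\ref{truz}, which gives $\arb{Q_n}\le\tau(Q_n)=\lceil (n+1)/2\rceil$. A one-line case check on the parity of $n$ shows $\lceil(n+1)/2\rceil=\lfloor n/2\rfloor+1$ (for $n=2k$ both sides are $k+1$; for $n=2k+1$ both sides are $k+1$), so $\arb{Q_n}\le\lfloor n/2\rfloor+1$.

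For the lower bound I would use the middle inequality $\lceil |E(G)|/(|V(G)|-1)\rceil\le\arb{G}$ from~\eqref{inq} (equivalently, Nash-Williams' Theorem~\ref{Tnash} applied with $H=Q_n$), together with $|E(Q_n)|=n2^{n-1}$ and $|V(Q_n)|=2^n$. The key point is that $n2^{n-1}/(2^n-1)$ is strictly larger than $n/2$, since $2^{n-1}/(2^n-1)>1/2$; writing $n=2k$ (where the fraction equals $k+k/(2^{2k}-1)$) or $n=2k+1$, one checks that the fraction lies strictly between $\lfloor n/2\rfloor$ and $\lfloor n/2\rfloor+1$, the only estimate needed being $2^{2k}\ge k+1$, which is immediate. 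Hence $\lceil n2^{n-1}/(2^n-1)\rceil=\lfloor n/2\rfloor+1$ and therefore $\arb{Q_n}\ge\lfloor n/2\rfloor+1$.

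Combining the two bounds gives $\arb{Q_n}=\lfloor n/2\rfloor+1$. The only mild obstacle is the routine parity and ceiling bookkeeping in identifying both $\lceil n2^{n-1}/(2^n-1)\rceil$ and $\lceil(n+1)/2\rceil$ with $\lfloor n/2\rfloor+1$; once that is done the corollary is immediate from the already-quoted results.
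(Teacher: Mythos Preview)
Your proposal is correct and follows essentially the same approach as the paper: both arguments sandwich $\arb{Q_n}$ using inequalities~\eqref{inq}, invoking Theorem~\ref{truz} for the upper bound and the elementary edge/vertex count (equivalently Nash--Williams with $H=Q_n$) for the lower bound. Your treatment of the ceiling and parity bookkeeping is in fact slightly more explicit than the paper's, but the underlying idea is identical.
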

\begin{proof}
From Inequalities~(\ref{inq}) and Theorem~\ref{truz} we have $\arb{Q_n} \leq \lceil {n+1 \over 2}\rceil=\lfloor {n \over 2} \rfloor +1$. So it suffices to prove the other side.
Since each forest of $Q_n$ has at most  $2^n-1$ edges, so by Theorem~\ref{Tnash} we have $\mbox{arb}(Q_n) \geq \lceil{{n2^{n-1}} \over {2^n-1}}\rceil = \lceil {n \over 2}(1+{1 \over 2^n-1}) \rceil \geq \lfloor{ n \over 2} \rfloor + \varepsilon$, where $\varepsilon > 0$. Thus $\mbox{arb}(Q_n) \geq \lfloor {n \over 2} \rfloor +1$.
\end{proof}

The construction in the proof of  Theorem~\ref{truz} for the arboricity of hypercube, gives a decomposition of edges into trees where only one of them is a spanning tree. Here we will introduce a construction with the maximum possible number of spanning trees.

For  $ Q_{2k} $, Theorem~\ref{eds} gives a construction for arboricity with $\lfloor n/2 \rfloor$ spanning trees and a path of length $k$. In the next theorem by a similar method we consider a new construction for $Q_{2k}$ which can be extended to  a construction for $Q_{2k+1}$, with maximum possible number of spanning trees and also a construction for its arboricity.
\begin{theorem}
\label{t2k}
The hypercube  $ Q_{2k} $ may be decomposed into $k$ edge-disjoint spanning trees and a matching of size  $ k $.
 \end{theorem}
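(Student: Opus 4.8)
The plan is to build the $k$ edge-disjoint spanning trees of $Q_{2k}$ explicitly by induction on $k$, using the recursive structure $Q_{2k}=Q_2\,\square\,Q_{2k-2}=K_2\,\square\,K_2\,\square\,Q_{2k-2}$. The base case $k=1$ is $Q_2=C_4$, which is a $4$-cycle; removing a perfect matching of size $1$ (one antipodal pair of edges does not work in $C_4$, but two opposite edges leave a path of length $2$ which is a spanning tree, while the two removed edges form a matching of size $1$ — wait, $C_4$ has $4$ edges, a spanning tree has $3$ edges, so we remove $1$ edge, not a matching of size $1$; but $k=1$ so a matching of size $1$ is exactly one edge, consistent). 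So $Q_2$ decomposes into one spanning tree (a path $P_4$) plus one leftover edge, which is trivially a matching of size $1$.

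For the inductive step, assume $Q_{2k-2}$ decomposes into $k-1$ edge-disjoint spanning trees $T_1,\dots,T_{k-1}$ together with a matching $M$ of size $k-1$. View $Q_{2k}$ as four disjoint copies $Q^{00},Q^{01},Q^{10},Q^{11}$ of $Q_{2k-2}$, indexed by the first two coordinates, together with the ``connector'' edges: a perfect matching between $Q^{ab}$ and $Q^{a'b'}$ whenever $(a,b),(a',b')$ differ in one coordinate (so $Q^{00}\!-\!Q^{01}$, $Q^{00}\!-\!Q^{10}$, $Q^{01}\!-\!Q^{11}$, $Q^{10}\!-\!Q^{11}$; the two diagonal matchings $Q^{00}\!-\!Q^{11}$ and $Q^{01}\!-\!Q^{10}$ are \emph{not} present since those pairs differ in two coordinates). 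First I would take $k-1$ of the new spanning trees to be obtained by combining the $i$-th spanning tree from each of the four copies: $\widehat T_i = T_i^{00}\cup T_i^{01}\cup T_i^{10}\cup T_i^{11}$ is a spanning forest of $Q_{2k}$ with exactly four components, and adding three of the connector matchings' edges (a carefully chosen single edge from three of the four connector matchings, so as to link the four copies into one tree) turns each $\widehat T_i$ into a spanning tree; this uses $3$ edges per tree, $3(k-1)$ connector edges total, but each connector matching has $2^{2k-2}$ edges, so this is a negligible fraction. The $k$-th spanning tree I would build from the four leftover matchings $M^{00},M^{01},M^{10},M^{11}$ (together a matching, not a tree) plus the bulk of the four connector matchings: the four connector matchings alone already form a spanning subgraph that is a disjoint union of $4$-cycles (one $4$-cycle through each residue), so from them one can extract a spanning tree of the ``quotient'' structure after peeling off one edge per $4$-cycle, and the peeled edges plus the copies of $M$ must be arranged into the final matching of size $k$.

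The main obstacle is the bookkeeping of which connector edges go into the spanning trees and which go into the leftover matching, while ensuring the leftover is genuinely a matching (no two leftover edges share a vertex). The count works out: $Q_{2k}$ has $k\,2^{2k-1}$ edges, $k$ spanning trees use $k(2^{2k}-1)$ edges, leaving $k$ edges — so the leftover has exactly $k$ edges and we must force them to be independent. The cleanest route is probably not pure induction but a direct coordinate-based description: index edges by (vertex, direction) pairs, partition the $2k$ directions into $k$ pairs, and for the $i$-th pair $\{2i-1,2i\}$ define $T_i$ to consist of all edges in directions $2i-1$ and $2i$ except on a small set of vertices where we instead ``borrow'' an edge from another direction to restore connectivity and spanningness. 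The vertices at which each $T_i$ fails to be spanning/connected must be chosen so that the deleted edges, across all $i$, are pairwise non-adjacent; arranging this disjointness is exactly the delicate combinatorial core, and I would expect to handle it by a parity/Gray-code argument selecting, for each $i$, the unique vertex (or antipodal pair) whose incident structure is modified, analogous to the path of leftover edges in Theorem~\ref{eds} but rerouted into a matching.
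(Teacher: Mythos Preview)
Your inductive framework matches the paper's, but the construction of the $k$-th spanning tree has a genuine gap. After you build the first $k-1$ trees by spending \emph{all} of the inductive spanning trees $T_1,\dots,T_{k-1}$ in each of the four copies, the subgraph of $Q_{2k}$ that remains consists only of (i) the four copies of the leftover matching $M$, each of size $k-1$, and (ii) the connector edges minus the $3(k-1)$ you already used. This remaining subgraph is disconnected for every $k\ge 2$, so it cannot contain a spanning tree. Indeed, the connector edges decompose into $2^{2k-2}$ pairwise vertex-disjoint $4$-cycles, one over each vertex $v\in V(Q_{2k-2})$; thus without the matching edges the remaining graph already has at least $2^{2k-2}$ components. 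Each edge of $M^{ab}$ joins the $4$-cycle over some $u$ to the $4$-cycle over some $v$, but since all four copies carry the \emph{same} matching $M$, these edges link only $k-1$ pairs of fibres. Hence the remaining graph has at least $2^{2k-2}-(k-1)\ge 3$ components for $k\ge 2$. Your sentence ``from them one can extract a spanning tree of the `quotient' structure after peeling off one edge per $4$-cycle'' is exactly where this breaks: peeling one edge from each $4$-cycle leaves $2^{2k-2}$ disjoint $3$-paths, not a tree, and the $4(k-1)$ matching edges are far too few to merge them.

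The repair---and this is the paper's key idea---is to hold back one of the inductive spanning trees. Build only $k-2$ trees in the ``obvious'' way (four copies of $T_i$ joined by three connector edges, $1\le i\le k-2$), and treat the four copies of $T_{k-1}$ asymmetrically to manufacture the last \emph{two} spanning trees. Concretely: one new tree is $T_{k-1}^{00}$ together with three of the connector matchings (minus a few reserved edges) and the leftover matchings $M^{01},M^{10},M^{11}$; the other new tree is $T_{k-1}^{01}\cup T_{k-1}^{11}\cup T_{k-1}^{10}$ together with the fourth connector matching and two reserved connector edges. The leftover is then $M^{00}$ plus a single reserved connector edge, a matching of size $k$. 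The point is that horizontal connectivity across the $2^{2k-2}$ fibres must be supplied by a spanning tree of $Q_{2k-2}$, not by the sparse leftover matching. Your alternative coordinate-pair sketch has the same defect: the edges in directions $\{2i-1,2i\}$ again form $2^{2k-2}$ disjoint $4$-cycles, and you give no mechanism for connecting them.
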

\begin{proof}
By induction on $k$. For $k=1$, the $2$-dimensional hypercube $Q_2$, contains one spanning tree and a matching of size $1$. Assume the theorem holds for every even number $n \leq 2k$, we construct a decomposition of $2(k+1)$-dimensional hypercube, $Q_{2(k+1)}$, with $k+1$ spanning trees such that the remaining edges make a matching of size $k+1$. Since  $ Q_{2(k+1)} = Q_2 \square Q_{2k}$,  it can be decomposed into four $2k$-dimensional hypercubes, \ $ Q_{2k}^ {(i)}$, \ $ 1\leq i \leq4$, such that the pairs $\{Q_{2k}^{(1)}, Q_{2k}^{(2)}\}$,  $\{Q_{2k}^{(1)},Q_{2k}^{(4)}\}$, $\{Q_{2k}^{(2)},Q_{2k}^{(3)}\}$,  and $\{Q_{2k}^{(3)},Q_{2k}^{(4)}\}$  are connected to each other. For example this connection for $\{Q_{2k}^{(1)} Q_{2k}^{(2)}\}$
is such that every vertex in $Q^{(1)}_{2k}$ is adjacent to its corresponding vertex in $Q^{(2)}_{2k}$. We denote the set of edges between $Q^{(1)}_{2k}$  and $Q^{(2)}_{2k}$ by $M^{(1,2)}$. It is a matching of size   $2^{2k}$. In a similar way  $M^{(2,3)}$, $M^{(3,4)}$ and $M^{(1,4)}$ are defined. By the induction  hypothesis for $ Q^{(1)}_{2k}$, we have a decomposition, say
$$
\left\{ \begin{array}{lr}
{\cal T}^{(1)} = \lbrace T_1^{(1)} , T_2^{(1)} , ... , T_k^{(1)} \rbrace \\
I^{(1)} = \lbrace e_1^{(1)} , e_2^{(1)} , ... , e_k^{(1)} \rbrace,
\end{array}\right.
$$
where ${\cal T}^{(1)}$ is the set of $k$ spanning trees and $I^{(1)}$ is the set of $k$ remaining independent edges. We denote the corresponding decomposition in other $Q^{(i)}_{2k}$'s, $i=2,3,4$, as:
 $$
\left\{ \begin{array}{lr}
{\cal T}^{(i)} = \lbrace T_1^{(i)} , T_2^{(i)} , ... , T_k^{(i)} \rbrace \\
I^{(i)} = \lbrace e_1^{(i)} , e_2^{(i)} , ... , e_k^{(i)} \rbrace.
\end{array}\right.
$$
 So for each $j$, $1 \leq j \leq k$, $e^{(1)}_j$ and $e^{(2)}_j$, are two corresponding edges in $Q^{(1)}_{2k}$ and $Q^{(2)}_{2k}$. They are connected with two edges of $M^{(1,2)}$. We choose one of those two edges, and denotes it by $f^{(1,2)}_j$. Let $F^{(1,2)}$ denote the set of $k$ chosen $f^{(1,2)}_j$'s. In a similar way  $F^{(2,3)}$, $F^{(3,4)}$ and $F^{(1,4)}$ are defined.

First we construct $k-1$ spanning trees as follows. Consider the four spanning trees $T^{(i)}_1$ of $Q_{2k}^{(i)}$, $1~\leq~i~\leq 4$, and connect them by using $f_1^{(1,2)}$, $f_1^{(2,3)}$ and $f_1^{(3,4)}$. It is clear that the resulting subgraph is a spanning tree of  $Q_{2(k+1)}$, let it be $\widehat{T}_1$. In a similar way $\widehat{T}_2, ... ,\widehat{T}_{k-1}$ are constructed.
Now consider the tree $T_k^{(1)}$, and the sets $I^{(2)}$, $I^{(3)}$ and $I^{(4)}$.  Connect them by using   $M^{(1,2)} \setminus F^{(1,2)} $, $M^{(2,3)} \setminus F^{(2,3)}$ and $M^{(3,4)} \setminus F^{(3,4)}$. The resulting subgraph is a spanning tree, let $\widehat{T}_k$  denote this tree.

The last spanning tree, $\widehat{T}_{k+1}$, is constructed by the union of $T_k^{(2)}$, $T_k^{(3)}$, $T_k^{(4)}$, and the set  $M^{(1,4)}$ and two edges, $f_k^{(1,2)}$ and  $f_k^{(3,4)}$. Finally the set $I^{(1)} \cup \{ f_k^{(2,3)} \}$ makes a set of $k+1$ independent edges of $Q_{2(k+1)}$.

To show that the three kinds of edge sets $\widehat{T}_j (1 \leq j \leq k-1), \widehat{T}_k, \widehat{T}_{k+1}$ are indeed spanning trees of  $Q_{2(k+1)}$, it suffices to show that each edge set is incident with all $2^{2k+2}$ vertices and each edge set has cardinality $2^{2k+2}-1$. The first, takes place obviously in all three cases, and for the second we have 

$
|E(\widehat{T}_j )| = 4(2^{2k}-1)+3=2^{2k+2}-1, \hspace*{.2cm}(1 \leq j \leq k-1), \\
|E(\widehat{T}_k)|= (2^{2k}-1)+3(2^{2k}-k)+3k =2^{2k+2}-1, \\ 
|E(\widehat{T}_{k+1})|= 3(2^{2k}-1)+2^{2k}+2=2^{2k+2}-1.$
\end{proof}

\begin{theorem} \label{arbodd}
The hypercube $Q_{2k+1}$ may be decomposed into $k$ edge-disjoint spanning trees with the remaining edges forming a forest with $k$ components.
\end{theorem}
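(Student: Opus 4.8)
The plan is to build the decomposition of $Q_{2k+1}$ directly from the decomposition of $Q_{2k}$ furnished by Theorem~\ref{t2k}, exploiting the fact that $Q_{2k+1}=K_2\,\square\,Q_{2k}$. Write $Q_{2k+1}$ as two copies $A$ and $B$ of $Q_{2k}$ together with a perfect matching $M$ of size $2^{2k}$ joining each vertex of $A$ to its twin in $B$. By Theorem~\ref{t2k} each of $A$ and $B$ decomposes into $k$ edge-disjoint spanning trees $T_1^{(A)},\dots,T_k^{(A)}$ (resp.\ $T_1^{(B)},\dots,T_k^{(B)}$) together with a matching $I^{(A)}=\{e_1^{(A)},\dots,e_k^{(A)}\}$ (resp.\ $I^{(B)}$) of size $k$ on the leftover edges. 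For each $j$ with $1\le j\le k-1$, pick the matching edge $f_j\in M$ that joins the endpoints of $e_j^{(A)}$ in $A$ to their twins in $B$ (more precisely: since $e_j^{(A)}$ and $e_j^{(B)}$ are ``corresponding'' leftover edges, choose one of the two $M$-edges between them), and set $\widehat T_j = T_j^{(A)}\cup T_j^{(B)}\cup\{f_j\}$. This is connected (it joins two spanning trees by one edge) and spans all $2^{2k+1}$ vertices with exactly $2(2^{2k}-1)+1=2^{2k+1}-1$ edges, hence is a spanning tree of $Q_{2k+1}$.

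The remaining edges to place are: $T_k^{(A)}$, $T_k^{(B)}$, the two matchings $I^{(A)},I^{(B)}$, and the $2^{2k}-(k-1)$ unused edges of $M$. The natural move is to form the $k$th spanning tree $\widehat T_k$ by taking $T_k^{(A)}$ (a spanning tree of the $A$-side) together with the leftover matching $I^{(B)}$ on the $B$-side, and connecting the $B$-side vertices to the already-spanned $A$-side using the unused matching edges of $M$. Concretely, $\widehat T_k = T_k^{(A)}\cup I^{(B)}\cup\big(M\setminus\{f_1,\dots,f_{k-1}\}\big)$: every vertex of $B$ is incident to its $M$-edge (all of $M$ is used except the $k-1$ edges $f_j$, but those endpoints are still reached since $\widehat T_k$ will still include enough $M$-edges — one must check this carefully), the $A$-side is spanned by $T_k^{(A)}$, and acyclicity holds because $I^{(B)}$ is a matching and the $M$-edges form a matching, so no cycle can close through the $B$-side. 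Counting edges: $(2^{2k}-1)+k+(2^{2k}-(k-1))=2^{2k+1}$, which is one too many; so in fact one should use $M\setminus\{f_1,\dots,f_{k-1},g\}$ for one further edge $g$, or equivalently leave out one $I^{(B)}$ edge, and the bookkeeping must be arranged so that exactly $2^{2k+1}-1$ edges remain and the graph stays connected and acyclic. This is the delicate point and mirrors the edge-count verification at the end of the proof of Theorem~\ref{t2k}.

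Finally, the genuinely leftover edges form the graph $I^{(A)}\cup I^{(B)}\cup(\text{the handful of }M\text{-edges not used above})$. Since $I^{(A)}$ lives entirely in $A$, $I^{(B)}$ entirely in $B$, and any remaining $M$-edges form a partial matching between the two sides, the leftover graph is a disjoint union of paths of length at most two (a leftover $M$-edge can be adjacent to at most one $I^{(A)}$-edge at its $A$-endpoint and one $I^{(B)}$-edge at its $B$-endpoint, but the construction should be tuned so that leftover $M$-edges are vertex-disjoint from $I^{(A)}$ and $I^{(B)}$), hence is a forest. The component count should come out to exactly $k$: there are $k$ edges in $I^{(A)}$ (hence at most $k$ components from that side), and the construction must absorb $I^{(B)}$ and all but a controlled number of $M$-edges into the spanning trees, leaving $I^{(A)}$ (suitably extended by a pendant $M$-edge each, as in the $f_k^{(2,3)}$ trick in Theorem~\ref{t2k}) as $k$ separate components.

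I expect the main obstacle to be the edge-counting and connectivity bookkeeping for $\widehat T_k$: one must choose precisely which $M$-edges go into $\widehat T_k$ versus which are left over, so that (i) $\widehat T_k$ is connected and acyclic with exactly $2^{2k+1}-1$ edges, and (ii) the residual graph is a forest with exactly $k$ components rather than more. The cleanest way to manage this is probably to mimic the four-copy argument of Theorem~\ref{t2k} but collapsed to two copies, tracking the role played there by the ``extra'' matching edges $f_k^{(1,2)}$, $f_k^{(3,4)}$, and $f_k^{(2,3)}$; alternatively, one can first apply Theorem~\ref{t2k} to $Q_{2k}$ and then argue that adding the last dimension contributes exactly enough edges to promote the size-$k$ matching into a $k$-component forest while leaving the $k$ spanning trees essentially intact plus one new spanning tree from the matching edges — but the paper's phrasing ``by a similar method'' in the paragraph preceding Theorem~\ref{t2k} suggests the authors intend the four-copy style induction, extended by one dimension.
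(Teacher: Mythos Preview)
Your overall plan matches the paper's proof exactly: split $Q_{2k+1}$ into two copies $A,B$ of $Q_{2k}$ joined by a perfect matching $M$, apply Theorem~\ref{t2k} in each copy, glue $T_j^{(A)}$ to $T_j^{(B)}$ by a single $M$-edge $f_j$ for $j=1,\dots,k-1$, and build $\widehat T_k$ from $T_k^{(A)}$, the set $I^{(B)}$, and almost all of $M$. However, your execution has two genuine gaps.

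\textbf{The missing $M$-edge $f_k$.} You define $f_j$ only for $j\le k-1$, and then discover $\widehat T_k$ has one edge too many. The correct fix is not an arbitrary edge $g$: you must also define $f_k$ (one of the two $M$-edges meeting $e_k^{(A)}$, equivalently $e_k^{(B)}$) and set $\widehat T_k = T_k^{(A)}\cup I^{(B)}\cup\bigl(M\setminus\{f_1,\dots,f_k\}\bigr)$. The choice of $f_k$ is forced by acyclicity, not just by the edge count: with only $f_1,\dots,f_{k-1}$ removed, the edge $e_k^{(B)}=bb'$ together with the two surviving $M$-edges at $b,b'$ and the $T_k^{(A)}$-path between their twins forms a $4$-cycle-with-path, so your $\widehat T_k$ as written contains a cycle. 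Removing $f_k$ breaks exactly that cycle, and the same reasoning shows the remaining $e_j^{(B)}$ create no cycles because each has one incident $M$-edge already removed.

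\textbf{The leftover forest.} You write that the leftover is $I^{(A)}\cup I^{(B)}\cup(\text{a few }M\text{-edges})$ and picture $k$ short paths. This is wrong: $I^{(B)}$ has already been absorbed into $\widehat T_k$, and you have not placed $T_k^{(B)}$ anywhere. The leftover must contain $T_k^{(B)}$, which alone has $2^{2k}-1$ edges (and indeed the total leftover has $2^{2k}+k$ edges, far more than the $O(k)$ you describe). The correct residual is $T_k^{(B)}\cup\{f_k\}\cup I^{(A)}$. Since $T_k^{(B)}$ spans $B$, the edge $f_k$ attaches to it, and the $A$-endpoint of $f_k$ is an endpoint of $e_k^{(A)}\in I^{(A)}$; hence $T_k^{(B)}\cup\{f_k\}\cup\{e_k^{(A)}\}$ is a single tree. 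The remaining $k-1$ edges $e_1^{(A)},\dots,e_{k-1}^{(A)}$ are pairwise disjoint and disjoint from that tree (they lie in $A$ and $I^{(A)}$ is a matching), giving exactly $k$ components.
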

\begin{proof}
The hypercube $Q_{2k+1}$, consists of two copies of $2k$-dimensional hypercubes, $Q_{2k}^{(1)}$ and $Q_{2k}^{(2)}$, with a perfect matching between them. We consider definitions and notations introduced in the proof of Theorem~\ref{t2k}
for the  hypercubes $Q_{2k}^{(i)}$, $i =1,2$,
and also the construction for the decomposition of hypercubes $Q_{2k}$. At first we find $k$ spanning trees for $Q_{2k}^i$'s. We may assume  $Q_{2k}^{(i)}$, $i =1,2$, has the set $\{T_1^{(i)}, \ldots ,T_k^{(i)} \}$ of spanning trees.  Consider the union of trees $T_1^{(1)}$ and $T_1^{(2)}$, and the edge $f_{1}^{(1,2)}$. Clearly this is a spanning tree for $Q_{2k+1}$, let it be $\widehat{T}_1$. In a similar way construct $\widehat{T}_2,\ldots,\widehat{T}_{k-1}$. The spanning tree $\widehat{T}_{k}$ is constructed by the union of $T_k^{(1)}$, $M^{(1,2)} \setminus F^{(1,2)}$, and the set $I^{(2)}$. At last the set $I^{(1)} \cup \{f_k^{(1,2)}\} \cup T_k^{(2)}$ makes a forest with $k$ components.~\end{proof}
\begin{remark}
Note that the {\em  existence} of maximum spanning tree decomposition of $Q_n$ can be deduced from Theorem~\ref{ctl}, but our results are {\em constructive}.
\end{remark}
\begin{remark}
Using the construction of Theorem~\ref{eds}, by the same method as in the proof of Theorem~\ref{arbodd}, a set of $k$ edge-disjoint spanning trees for $Q_{2k+1}$  can be deduced. But the remaining edges will not necessarily be a forest.
\end{remark}
\begin{problem}
It will be interesting to try to find a construction for Catlin's Theorem~\ref{ctl}.
\end{problem}
\section*{Acknowledgements}
The authors appreciate  Narges K. Sobhani for her useful comments and presence in the  earlier discussions. 
We wish to thank an anonymous referee for his/her comments.
%%%%%%%%%%%%%%%%%%%%%%%%%%%%%%%%%%%%%%%%%%%%%%%%%%%%%%%%%%%%%%%%%%%%%
%\bibliographystyle{siam}
%\bibliography{arboricity4}

\end{document}